\title{Coarse equivalence and topological couplings of locally compact groups}
\author{Uri Bader}
\address{Faculty of Mathematics and Computer Science\\
Weizmann Institute of Science\\
234 Herzl Street\\
Rehovot 7610001\\
Israel
}
\email{bader@weizmann.ac.il}
\urladdr{http://www.weizmann.ac.il/math/uribader/}
\author {Christian Rosendal}
\address{Department of Mathematics, Statistics, and Computer Science (M/C 249)\\
University of Illinois at Chicago\\
851 S. Morgan St.\\
Chicago, IL 60607-7045\\
USA}
\email{rosendal.math@gmail.com}
\urladdr{http://homepages.math.uic.edu/$~$rosendal}
\date {}
\newcommand{\norm}[1]{\lVert#1\rVert}
\newcommand{\NORM}[1]{\Big\lVert#1\Big\rVert}
\newcommand {\R}{\mathbb R}
\newcommand{\om}{\omega}
\newcommand{\eps}{\epsilon}
\newcommand{\equi}{\Leftrightarrow}
\newcommand{\ov}{\overline}
\newcommand{\inv}{^{-1}}
\newtheorem{thm}{Theorem}%[section]
\newtheorem{cor}[thm]{Corollary}
\newtheorem{claim}[thm] {Claim}
\theoremstyle{definition}
\begin{document}
%\subjclass[2000]{Primary: 20E08, Secondary: 03E15}

\keywords{Locally compact groups, coarse geometry, topological couplings}
\thanks{The second author was partially supported by  the NSF (DMS 1464974)}

\maketitle

\begin{abstract}
In \cite{gromov}, M. Gromov showed that any two finitely generated groups $\Gamma$ and $\Lambda$ are quasi-isometric if and only if they admit a topological coupling, i.e., a commuting pair of proper continuous cocompact actions $\Gamma\curvearrowright X \curvearrowleft \Lambda$ on a locally compact Hausdorff space. This result is extended here to all (compactly generated) locally compact second countable groups.
\end{abstract}

\

\

%\section{Theorem}
In his seminal work on geometric group theory  \cite{gromov}, M. Gromov formulated a topological criterion for quasi-isometry of finitely generated groups (see $0.2.C_2'$  in  \cite{gromov}). His idea was to replace the geometric objects, namely, the finitely generated groups, by a purely topological framework, namely, a locally compact Hausdorff space, which has no intrinsic large scale geometric structure.

As it is, Gromov's proof easily adapts to characterise coarse equivalence of arbitrary countable discrete groups, but thus far the case of locally compact groups has not been adressed and indeed Gromov's construction is insufficient to deal with these. The present paper presents a solution to this problem by establishing the following theorem.

\begin{thm}\label{main}
Two locally compact second countable groups  are coarsely equivalent if and only if they admit a topological coupling. 
\end{thm}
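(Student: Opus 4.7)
My plan splits along the two directions.

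For \emph{coupling implies coarse equivalence}, I would fix a basepoint $x_0\in X$ and compact sets $K_\Gamma,K_\Lambda\subseteq X$ with $\Gamma K_\Gamma=X=\Lambda K_\Lambda$. Using Borel sections of the orbit maps $X\to\Gamma\backslash X$ and $X\to\Lambda\backslash X$, I would define Borel maps $\phi\colon\Gamma\to\Lambda$ and $\psi\colon\Lambda\to\Gamma$ characterised by $\phi(\gamma)^{-1}\gamma x_0\in K_\Lambda$ and $\psi(\lambda)^{-1}\lambda x_0\in K_\Gamma$. Commutation of the two actions together with their joint properness then forces $\phi$ and $\psi$ to be coarsely Lipschitz coarse inverses: for instance, if $\gamma_1^{-1}\gamma_2$ lies in a compact $C\subseteq\Gamma$, then $\phi(\gamma_1)^{-1}\phi(\gamma_2)$ is trapped inside the relatively compact set $\{\lambda\in\Lambda:\lambda(CK_\Lambda)\cap K_\Lambda\ne\emptyset\}$, and a symmetric computation handles coarse properness and $\psi\circ\phi\approx\operatorname{id}$.

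The other direction is the substantial one. The naive Gromov construction of closing up the orbit of $\phi$ inside $\Lambda^\Gamma$ breaks down in the non-discrete setting, because $\phi$ is only Borel and only defined up to relatively compact indeterminacy, so no pointwise topology on $\Lambda^\Gamma$ can yield a locally compact, jointly continuous action. My substitute is to replace functions by closed subsets, working inside the Chabauty space $\mathcal F(\Gamma\times\Lambda)$, which is compact, Hausdorff, and second countable, and carries a jointly continuous action $(\gamma,\lambda)\cdot B=\gamma B\lambda^{-1}$ of $\Gamma\times\Lambda$. Starting from $\phi$, I would cook up a closed symmetric set $A\subseteq\Gamma\times\Lambda$ by convolving the graph of $\phi$ with a small compact symmetric identity neighbourhood, arranging that (i) all horizontal and vertical slices of $A$ lie in one uniform compact set, and (ii) $A$ projects coarsely surjectively to each factor. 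I would then define $X$ to be the closure of the $\Gamma\times\Lambda$-orbit of $A$ inside the $\Gamma\times\Lambda$-invariant subspace of $\mathcal F(\Gamma\times\Lambda)$ cut out by (i). Such an $X$ is compact, the two actions on it are continuous, cocompactness is automatic because $X$ is the closure of a single orbit, and properness of each action is meant to follow from the uniform slice bound (i).

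The crux is this last clause. The bound (i) must be stable under Chabauty limits so that the defining subspace of $\mathcal F(\Gamma\times\Lambda)$ really is closed, and it must be strong enough to prevent ``degenerate'' limits, such as the empty set or closed subsets that have drifted off to infinity in one coordinate, from entering $X$. Only once (i) is known to survive to all limit points can one read off compact stabilisers and properly embedded orbits for both actions. In Gromov's discrete setup, properness is inherited automatically from pointwise convergence; here the main technical work is precisely to design the entourage $A$ and the ambient Chabauty subspace so that a uniform quantitative control on elements of $\mathcal F(\Gamma\times\Lambda)$ is preserved under closure, with both $\Gamma$ and $\Lambda$ continuous rather than discrete. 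That step is where I expect the bulk of the argument to lie, and it is what the discrete-case proof does not need to do.
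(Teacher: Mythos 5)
The direction ``coupling $\Rightarrow$ coarse equivalence'' is the standard argument and your sketch of it is fine (the paper leaves this direction implicit). The problem is the substantive direction, where your outline contains an internal contradiction that marks exactly the missing work. You assert that $X$, the orbit closure in the Chabauty space $\mathcal F(\Gamma\times\Lambda)$, is compact, that cocompactness is therefore automatic, and that both actions on it are proper. But a proper action of a non-compact locally compact group on a nonempty compact space is impossible: taking $K=X$ in the definition of properness forces $\{\gamma\in\Gamma\mid \gamma X\cap X\neq\emptyset\}=\Gamma$ to be compact. So either your $X$ does contain the degenerate limits (the empty set, or sets that have drifted to infinity in the $\Lambda$-coordinate while still projecting onto $\Gamma$ --- note that translating $A$ by $\lambda_n\to\infty$ preserves your conditions yet converges in Chabauty to $\emptyset$), in which case properness fails and these points must be excised; or $X$ is the relative closure inside the non-degenerate part, in which case it is not compact, cocompactness of \emph{each} action separately is no longer automatic, and even local compactness of $X$ needs proof. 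All three now hinge on showing that the subspace you cut out has the property that $\{B\mid B\cap C\neq\emptyset\}$ is Chabauty-compact for compact $C\subseteq\Gamma\times\Lambda$ and that every $B\in X$ can be translated by each group separately into such a set --- precisely the step you defer.

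Moreover, condition (i) as stated (all slices in one uniform compact set) is neither translation-invariant as phrased nor strong enough for properness: when $B$ and $\gamma B$ (or $B\lambda^{-1}$) both meet a fixed compact set, the two points of $B$ you must compare do not lie in a common slice, so you need a uniform two-sided modulus on \emph{arbitrary pairs} of points of $B$ (if two points of $B$ have $\Lambda$-coordinates at distance $\leqslant t$, their $\Gamma$-coordinates are at distance $\leqslant\omega(t)$, and conversely), and similarly (ii) must be a uniform coarse-density statement for both projections to exclude the empty limit and to give cocompactness of each single action; one must then check these conditions are preserved under Chabauty limits of anchored nonempty sets. This route can plausibly be completed, and it is genuinely different from the paper's, but as written the proposal stops where the theorem begins. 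For comparison, the paper avoids Chabauty space entirely: it smooths the coarse equivalence into a $2MN$-Lipschitz map $\psi\colon H\to L^1(G,\mu)$ via a Lipschitz partition of unity subordinate to a net, sets $\Omega=\overline{G\cdot\psi\cdot H}$ inside $X^H$ where $X\subseteq L^1(G,\mu)$ is a locally compact set of convex combinations of translates of $\chi_B$, and the uniform Lipschitz constant together with the uniform control of essential supports passes to every point of the closure, from which local compactness, continuity, properness and cocompactness of both actions are derived.
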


As coarse equivalence of locally compact, compactly generated groups is just quasi-isometry, we have the following corollary.
\begin{cor}
Two compactly generated, locally compact second countable groups are quasi-isometric if and only if they admit a topological coupling.
\end{cor}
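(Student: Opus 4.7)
The plan is to split the equivalence into its two directions, with the forward direction (coupling implies coarse equivalence) being a reasonably direct generalization of Gromov's original argument, and the reverse direction (coarse equivalence implies coupling) being the main technical contribution.

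\textbf{Coupling implies coarse equivalence.} Suppose $\Gamma \curvearrowright X \curvearrowleft \Lambda$ are commuting proper continuous cocompact actions. I would fix a compact $K \subset X$ with $\Lambda K = X$ and a basepoint $x_0 \in K$, and select any map $\phi : \Gamma \to \Lambda$ with $\gamma x_0 \in \phi(\gamma) K$. For bornologousness: if $\alpha$ ranges in a compact $C \subset \Gamma$, commutativity of the actions gives $\alpha \gamma x_0 \in \phi(\gamma) \cdot (CK)$, while by construction $\alpha \gamma x_0 \in \phi(\alpha \gamma) K$; properness of the $\Lambda$-action on the compact $K \cup CK$ forces $\phi(\gamma)^{-1}\phi(\alpha \gamma)$ into a compact subset of $\Lambda$ depending only on $C$. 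Cocompactness of $\Gamma \curvearrowright X$ applied to $K$ yields coarse density of $\phi(\Gamma)$ in $\Lambda$. The symmetric construction furnishes a coarse inverse.

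\textbf{Coarse equivalence implies coupling.} Given a coarse equivalence $f : \Gamma \to \Lambda$, I would try to realize $X$ as a space of ``coarse maps close to $f$.'' Concretely, fix moduli of bornologousness and coarse properness realized by $f$ and let $X$ consist of all maps (or equivalently their closed graphs in $\Gamma \times \Lambda$) obeying these same moduli and lying within a fixed uniform bound of some translate of $f$; topologize $X$ by pointwise convergence, or in the graph picture by the Fell topology on closed subsets. The commuting $\Gamma$- and $\Lambda$-actions act by translation on source and target respectively.

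\textbf{Main obstacles.} The difficulty is ensuring $X$ is locally compact Hausdorff with proper cocompact actions on both sides. Local compactness should follow from an Arzel\`a--Ascoli-type argument once the moduli are fixed and target bounded sets are relatively compact. Properness of $\Gamma \curvearrowright X$ reduces to the coarse properness of each individual $h \in X$: if $\gamma_n \to \infty$ in $\Gamma$, then $h \circ \gamma_n$ leaves every compact neighborhood. Properness of $\Lambda \curvearrowright X$ is subtler and must exploit the existence of a coarse inverse for each $h$: only boundedly many $\lambda$ can move $h$ by a bounded amount in the target. Cocompactness amounts to every $h \in X$ being within uniformly bounded distance of some $\Gamma$-translate of $f$, which is forced by the construction. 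I expect the delicate point to be simultaneously arranging both properness statements while retaining cocompactness, since enlarging $X$ aids cocompactness but endangers properness, and shrinking it has the opposite effect; resolving this tension in the locally compact setting (where arguments available for countable discrete groups using precompact approximations of $\Gamma$ and $\Lambda$ are unavailable) is presumably where the main novelty of the paper lies.
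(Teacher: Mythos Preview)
Your write-up conflates two things. The Corollary as stated is deduced in the paper in one line from Theorem~\ref{main}: for compactly generated locally compact second countable groups, coarse equivalence and quasi-isometry coincide, so the corollary is Theorem~\ref{main} restricted to that class. What you have actually sketched is a proof of Theorem~\ref{main} itself, so let me comment on that.

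Your ``coupling $\Rightarrow$ coarse equivalence'' paragraph is fine; this is the easy direction and the paper does not even spell it out.

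For ``coarse equivalence $\Rightarrow$ coupling'' there is a genuine gap, and it is precisely the one you flag but do not resolve. If $X$ consists of maps $H\to G$ with prescribed moduli under pointwise convergence, the $H$-action by precomposition, $(\xi\cdot h)(f)=\xi(hf)$, is \emph{not} continuous in $h$ unless the elements $\xi$ are themselves continuous; but the given coarse equivalence $\phi$ need not be, and restricting to continuous maps destroys the orbit you want to close up. Passing to graphs in the Fell topology does not rescue this: limits of graphs of discontinuous maps need not be graphs, so you lose control of what $\Omega$ looks like and with it both properness and cocompactness on the $H$ side. This is exactly why the paper says Gromov's construction ``is insufficient'' here.

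The paper's fix is the missing idea: replace the target $G$ by a carefully designed compact-friendly subset $X\subseteq L^1(G,\mu)$ (convex combinations of disjoint translates $\chi_{gB}$ of a fixed ball), and replace $\phi$ by a \emph{Lipschitz} map $\psi\colon H\to X$ built from a partition of unity subordinate to a net in $H$. The Lipschitz bound survives passage to the closure $\Omega=\overline{G\cdot\psi\cdot H}\subseteq X^H$, which is what makes the $H$-action continuous and underlies the Arzel\`a--Ascoli-type local compactness you were hoping for. Your outline contains no mechanism producing such uniform continuity, and without it the argument does not close.
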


Let us recall that a {\em topological coupling} of two locally compact groups $G$ and $H$ is a pair $G\curvearrowright X \curvearrowleft  H$ of commuting, proper and cocompact continuous actions on a locally compact Hausdorff space $X$. Here the actions are {\em proper} if, for every compact subset $K\subseteq X$, the sets
$$
\{g\in G\mid gK\cap K\neq \emptyset\}, \quad \{h\in H\mid Kh\cap K\neq \emptyset\}
$$
are both compact. Also, the actions are cocompact if $X=G\cdot K=K\cdot H$ for some compact subset $K\subseteq X$.

A {\em coarse equivalence} between two metric spaces $X$ and $Y$ is a map $\phi\colon X\to Y$ for which $\phi[X]$ is {\em cobounded} in $Y$, i.e., $\sup_{y\in Y}d(y, \phi[X])<\infty$, and so that, for all sequences $x_n, z_n$ in $X$, we have
$$
\lim_nd(x_n,z_n)=\infty \;\;\equi \;\; \lim_nd(\phi(x_n), \phi(z_n))=\infty.
$$
Alternatively, the latter condition may be expressed by saying that there are functions $\kappa,\omega\colon \R_+\to \R_+$ with $\lim_{t\to \infty}\kappa(t)=\infty$ so that
$$
\kappa\big(d(x,z)\big)\leqslant d(\phi(x), \phi(z))\leqslant \omega\big(d(x,z)\big)
$$
for all $x,z\in X$.

On the other hand, a {\em quasi-isometry} is a coarse equivalence in which the bounding functions $\kappa$ and $\omega$ may be taken to be affine.

We note that every locally compact second countable group $G$ admits a compatible left-invariant {\em proper} metric $d$, i.e., whose balls are compact. Moreover, any two such metrics turn out to be coarsely equivalent and hence define a unique coarse geometry on $G$. If, in addition, $G$ is compactly generated, say $G=\langle K\rangle$ for some symmetric compact set $K$, there is a compatible left-invariant proper metric $d$ that is quasi-isometric to the left-invariant word metric $\rho_K$ induced by $K$. And as before, any two compact symmetric generating sets induce quasi-isometric word metrics and hence define a unique quasi-isometry type of $G$. Finally, any coarse equivalence between compactly generated, locally compact second countable  groups will in fact be a quasi-isometry.

\

We now proceed to the proof of Theorem \ref{main}.
\begin{proof}
Suppose $G$ and $H$ are locally compact second countable groups and let $d_G$ and $d_H$ be proper left-invariant compatible metrics on $G$ and $H$ respectively. The existence of such metrics is guaranteed by \cite{struble}. Let also $\mu$ denote left-invariant Haar measure on $G$ scaled so that the closed unit ball $B=\ov {B_{d_G}(1_G, 1)}$ has measure $1$. 

Assume that $\phi\colon H\to G$ is a coarse equivalence and let $\kappa_\phi, \om_\phi$ be respectively the compression and the expansion moduli of $\phi$, 
$$
\kappa_\phi(t)=\inf_{d_H(h_1,h_2)\geqslant t}d_G(\phi h_1, \phi h_2),
$$
$$
\om_\phi(t)=\sup_{d_H(h_1,h_2)\leqslant t}d_G(\phi h_1, \phi h_2)
$$
Choose also $s>0$ so that $\kappa_\phi(s)\geqslant 3$.

Let $Y\subseteq H$ be a maximal $s$-discrete subset, i.e., so that $d_H(y, y')\geqslant s$ and hence also $d_G\big(\phi(y), \phi(y')\big)\geqslant 3$ for all $y\neq y'$ in $Y$. By maximality, $Y$ is $s$-dense in $H$, i.e., for every $h\in H$ there is some $y\in Y$ with $d_H(h,y)<s$. For every $y\in Y$, define $\theta_y\colon H\to [0,s+1]$  by $\theta_y(h)=\max\{0,s+1- d_H(h,y)\}$. Note that $\theta_y$  is  $1$-Lipschitz and $\theta_y\geqslant 1$ on the ball of radius $s$ centred at $y$, while $\theta_y=0$ outside the ball of radius $s+1$. By properness of the metric $d_H$, it follows that
$$
\Theta(h)=\sum_{y\in Y}\theta_y(h)
$$
is a bounded Lipschitz function with $\Theta\geqslant 1$. 

Let now $Z=\phi[Y]$ and, for $z=\phi(y)$, define $\alpha_z=\frac{\theta_y}{\Theta}$.
Then the family $\{\alpha_{z}\}_{z\in Z}$ is a partition of unity of $H$ consisting of $N$-Lipschitz functions $\alpha_z\colon H\to [0,1]$ for some $N>0$, so that 
$$
{\rm supp}(\alpha_{\phi(y)})\subseteq B_{d_H}(y, s+1),
$$
for each $y\in Y$. Thus, for all $h\in H$, we have
$$
Z_h=\{z\in Z\mid \alpha_z(h)>0\}   \subseteq B_{d_G}\big(\phi(h), \om_\phi(s+1)\big).
$$
Note that $Z_h$ 
is $3$-discrete. We let $M$ be the maximal size of a $3$-discrete set of diameter at most $2\om_\phi(s+1)$ in $G$.

Let $\lambda\colon G\curvearrowright L^1(G,\mu)$ denote the left-regular representation and define a  function $\psi\colon H\to L^1(G,\mu)$ by
$$
\psi_h=\sum_{z\in Z}\alpha_z(h)\cdot \chi_{zB}=\sum_{z\in Z_h}\alpha_z(h)\cdot \chi_{zB} \in L^1(G,\mu).
$$
Thus  each $\psi_h$ is the convex combination of at most $M$  disjointly supported shifts $\lambda(z)\chi_B=\chi_{zB}$ of the characteristic function $\chi_B$. Therefore, for $h,f\in H$, 
\[\begin{split}
\norm{\psi_h-\psi_f}_{L^1}
&=\NORM{\sum_{z\in Z_h\cup Z_f}(\alpha_z(h)-\alpha_z(f))\cdot \chi_{zB}}_{L^1}\\
&\leqslant \sum_{z\in Z_h\cup Z_f}|\alpha_z(h)-\alpha_z(f)|\cdot \norm {\chi_{zB}}_{L^1}\\
&\leqslant \sum_{z\in Z_h\cup Z_f}N\cdot d_H(h,f)\\
&\leqslant 2MN\cdot d_H(h,f).
\end{split}\]
I.e., $\psi$ is $2MN$-Lipschitz.
Also, $\norm{\psi_h}_{L^1}=1$, while 
$$
{\rm supp}(\psi_h) \subseteq \ov{B_{d_G}(\phi(h), \om_\phi(s+1)+1)}.
$$

Set 
\[\begin{split}
X=
\{\xi\in L^1(G,\mu)\mid& \;\;  \xi=\sum_{i=1}^m\alpha_i\chi_{g_iB} \text{ where  $\{g_1,\ldots , g_m\}$ is a $3$-discrete subset of }\\
&\text{$G$ with diameter $\leqslant 2\om_\phi(s+1)$  and $\alpha_i\geqslant 0$ with $\sum_{i=1}^m \alpha_i=1$}\}.
\end{split}\]
Observe that $X$ is invariant under the left-regular presentation $\lambda\colon G\curvearrowright L^1(G,\mu)$.

\begin{claim}
$X$ is locally compact in the norm topology on $L^1(G,\mu)$. In fact, 
$$
[K, \eps]=\{\xi \in X\mid \langle \xi\mid \chi_K\rangle \geqslant \eps\}
$$
is norm compact for every compact set  $K\subseteq G$ and $\eps>0$. Conversely, every compact subset of $X$ is contained in some $[K,\eps]$.
\end{claim}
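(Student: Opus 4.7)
The plan is to establish compactness of each set $[K,\eps]$ by a support-localization plus subsequential-limit argument, deduce the converse from a finite $\frac{1}{2}$-net covering, and read off local compactness directly.

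For the first statement, I would begin by localizing supports. If $\xi=\sum_{i=1}^m\alpha_i\chi_{g_iB}\in[K,\eps]$, then $\sum_i\alpha_i\mu(g_iB\cap K)\geqslant\eps>0$ forces at least one $g_iB$ to meet $K$, so $g_i\in KB$ (the ball $B$ is symmetric because $d_G$ is left-invariant). The diameter constraint then places every other centre in the compact set $K'=KB\cdot\ov{B_{d_G}(1_G,2\om_\phi(s+1))}$, which is totally bounded. Since any $3$-discrete subset intersects each $1$-ball of a fixed finite $1$-cover of $K'$ in at most one point, the number $m$ of summands is uniformly bounded by some integer $L$. Consequently every element of $[K,\eps]$ is supported in a fixed compact set $K''\supseteq K'\cdot B$.

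Next I would argue sequential compactness. Given $\xi_n=\sum_{i=1}^{m_n}\alpha_i^n\chi_{g_i^nB}\in[K,\eps]$, pass to a subsequence so that $m_n\equiv m$ is constant, each $g_i^n\to g_i\in K'$, and each $\alpha_i^n\to\alpha_i\in[0,1]$. The closed inequalities $d_G(g_i^n,g_j^n)\geqslant 3$, the diameter bound, $\alpha_i\geqslant 0$, and $\sum_i\alpha_i=1$ all pass to the limit, so $\xi:=\sum_i\alpha_i\chi_{g_iB}\in X$. Continuity of the left-regular action $g\mapsto\lambda(g)\chi_B$ on $L^1(G,\mu)$ gives $\chi_{g_i^nB}\to\chi_{g_iB}$ in norm, whence $\xi_n\to\xi$ in $L^1$; and the continuous functional $\xi\mapsto\langle\xi,\chi_K\rangle$ preserves the bound, so $\xi\in[K,\eps]$. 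Since $L^1(G,\mu)$ is metrizable, sequential compactness suffices.

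For the converse, let $C\subseteq X$ be compact. Pick a finite $\tfrac{1}{2}$-net $\xi_1,\ldots,\xi_n$ in $C$, let $K_1,\ldots,K_n$ be their compact supports and put $K=\bigcup_i K_i$, still compact. Each $\xi_i$ satisfies $\langle\xi_i,\chi_K\rangle=1$, and for $\xi\in C$ with $\norm{\xi-\xi_j}_{L^1}<\tfrac{1}{2}$ one obtains
\[
\langle\xi,\chi_K\rangle\geqslant\langle\xi_j,\chi_K\rangle-\norm{\xi-\xi_j}_{L^1}>\tfrac{1}{2},
\]
so $C\subseteq[K,\tfrac{1}{2}]$. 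Local compactness of $X$ follows at once: for any $\xi_0\in X$ with compact support $K_0$, the open neighbourhood $X\cap B_{L^1}(\xi_0,\tfrac{1}{2})$ is contained in the compact set $[K_0,\tfrac{1}{2}]$.

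The main delicate point is the subsequential-limit step: one must ensure the candidate limit $\xi$ still lies in $X$, i.e., that no two centres $g_i^n,g_j^n$ collide in the limit, that the diameter constraint is preserved, and that norm convergence really holds. All three concerns are resolved because the relevant conditions are closed and the left-regular representation acts continuously on $L^1$.
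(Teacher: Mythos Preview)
Your proof is correct and follows essentially the same strategy as the paper's: for compactness of $[K,\eps]$ you localize the centres to a fixed compact set, pass to a subsequence with a constant number of summands and converging centres and coefficients, and use continuity of the left-regular representation to get norm convergence to an element of $X$; the paper does exactly this (invoking the global bound $M$ rather than rederiving a bound $L$, which is harmless redundancy on your part). The only genuine difference is in the converse: the paper covers a compact $C\subseteq X$ by open sets $(K,\eps)=\{\xi\in X\mid\langle\xi\mid\chi_K\rangle>\eps\}$ and extracts a finite subcover, whereas you use a finite $\tfrac12$-net and the estimate $|\langle\xi,\chi_K\rangle-\langle\xi_j,\chi_K\rangle|\leqslant\norm{\xi-\xi_j}_{L^1}$. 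Both are short and neither offers a real advantage over the other.
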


\begin{proof}
As $[K,\eps]$ is closed in $X$, it suffices to show that  every sequence  $\xi_n$  in $[K,\eps]$ has a convergent subsequence. So let $\xi_n$ be given. By passing to a subsequence, there is some $m\leqslant M$ so that each $\xi_n$ can be written as a convex combination
$$
\xi_n=\sum_{i=1}^m\alpha_{i,n}\chi_{g_{i,n}B}
$$
of some $3$-discrete subset  $\{g_{1,n},\ldots , g_{m,n}\}\subseteq G$ with diameter at most $2\om_\phi(s+1)$.
Note then that, as $\langle \xi_n\mid \chi_K\rangle >0$, we have $g_{i,n}B\cap K\neq\emptyset$ for some $i\leqslant m$ and thus that $d_G(g_{j,n},K)\leqslant 2\om_\phi(s+1)+1$ for all $j$. Therefore, by passing to a further subsequence, we may assume that $g_i=\lim_ng_{i,n}$ and $\alpha_i=\lim_n\alpha_{i,n}$ exist for all $i\leqslant m$. It follows that $\{g_1,\ldots, g_m\}$ is a $3$-discrete subset of $G$ with radius at most $2\om_\phi(s+1)$, that $\alpha_i\geqslant 0$ with $\sum_i\alpha_i=1$ and that 
$$
\xi=\sum_{i=1}^m\alpha_i\chi_{g_iB}\in X
$$
is the norm limit of the $\xi_n$. 

Suppose now instead that $C\subseteq X$ is compact. Then $C$ is covered by open sets of the form  
$$
(K, \eps)=\{\xi \in X\mid \langle {\xi}\mid \chi_K\rangle > \eps\}
$$
for $K$ compact and $\eps>0$ and hence may be covered by finitely many of these, $C\subseteq \bigcup_{i=1}^p(K_i,\eps_i)$. It thus follows that $C\subseteq [\bigcup_{i=1}^pK_i,\min_{i=1}^p\eps_i]$.
\end{proof}

Consider now the space of maps $X^H$ and note that $\psi\in X^H$.
Endow $X^H$ with the product topology and commuting left and right actions $G\curvearrowright X^H \curvearrowleft H$ by homeomorphisms given by
$$
(g\cdot \xi)_h=\lambda(g)\xi_h, \quad (\xi\cdot h)_f=\xi_{hf}
$$
for $g\in G$, $h,f\in H$ and $\xi \in X^H$.
We set $\Omega=\ov{G\cdot \psi\cdot H}$.
Note that $\Omega$ is $G\times H$-invariant. We will see that $\Omega$ is a topological coupling of $G$ and $H$.

Note first that, for $g\in G$ and $h, f,f_1, f_2\in H$,
\[\begin{split}
\norm{(g\psi h)_{f_1}-(g\psi h)_{f_2}}_{L^1}
&=\norm{\lambda(g)\big(    \psi _{hf_1} - \psi_{hf_2}  \big)   }_{L^1} \leqslant 2MN\cdot d_H(f_1,f_2)
\end{split}\]
and conclude that for all $\xi \in \Omega$ and $f_1, f_2\in H$, 
\[\begin{split}
\norm{\xi_{f_1}-\xi_{f_2}}_{L^1}\leqslant 2MN\cdot d_H(f_1,f_2).
\end{split}\]
We also have 
$$
{\rm supp}\big((g\psi h)_f\big)
={\rm supp}\big(\lambda(g)\psi_{hf}\big)
=g\cdot {\rm supp}\big(\psi_{hf}\big) 
\subseteq \ov{B_{d_G}(g\phi(hf), \om_\phi(s+1)+1)}.
$$
Therefore, 
\[\begin{split}
\kappa_\phi\big(d_H(f_1,f_2)\big)-2\om_\phi(s+1)-2
&\leqslant d_G\Big( {\rm supp}\big((g\psi h)_{f_1}\big), {\rm supp}\big((g\psi h)_{f_2}\big)\Big)\\
&\leqslant \om_\phi\big(d_H(f_1,f_2)\big)+2\om_\phi(s+1)+2.
\end{split}\]
By consequence, we have that for all $\xi \in \Omega$ and $f_1, f_2\in H$, 
\[\begin{split}
\kappa_\phi\big(d_H(f_1,f_2)\big)-2\om_\phi(s+1)-2
&\leqslant d_G\Big( {\rm ess\; supp}\big(\xi_{f_1}\big), {\rm ess\; supp}\big(\xi_{f_2}\big)\Big)\\
&\leqslant \om_\phi\big(d_H(f_1,f_2)\big)+2\om_\phi(s+1)+2.
\end{split}\]

\begin{claim}
The space $\Omega=\ov{G\cdot \psi\cdot H}$ is locally compact.
\end{claim}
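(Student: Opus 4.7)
The plan is to show, for each $\xi_0\in \Omega$, that a single coordinate condition on $\xi_{1_H}$ already cuts out a closed relatively compact neighborhood of $\xi_0$, so that local compactness follows from Tychonoff once one checks that every coordinate $\xi_f$ is confined to a compact slice of $X$ depending only on $f$. A preliminary observation is that every $\xi\in \Omega\subseteq X^H$ satisfies $\norm{\xi_f}_{L^1}=1$ for all $f$, since every element of $X$ is a convex combination of disjointly supported shifts $\chi_{gB}$, each of $\mu$-measure $1$ (disjointness follows from the $3$-discreteness together with the fact that $B$ has radius $1$).

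Given $\xi_0$, I would pick a compact $K\subseteq G$ containing ${\rm supp}((\xi_0)_{1_H})$, which exists because $(\xi_0)_{1_H}\in X$ has support of bounded diameter. Setting
\[
U=\{\xi\in \Omega\mid \langle \xi_{1_H}\mid \chi_K\rangle >1/2\}, \qquad V=\{\xi\in \Omega\mid \xi_{1_H}\in [K,1/2]\},
\]
continuity of the projection $\xi\mapsto \xi_{1_H}$ and of $\eta\mapsto \langle \eta\mid \chi_K\rangle$ make $U$ open and $V$ closed in $\Omega$, with $\xi_0\in U\subseteq V$, so it suffices to show $V$ is compact. For $\xi\in V$, the support of $\xi_{1_H}$ meets $K$ and has diameter at most $2\om_\phi(s+1)+2$, hence lies in a fixed compact $K'\subseteq G$ by properness of $d_G$. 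For each $f\in H$, the essential-support distance bound established in the excerpt gives
\[
d_G\big({\rm ess\; supp}(\xi_{1_H}),{\rm ess\; supp}(\xi_f)\big)\leqslant \om_\phi(d_H(1_H,f))+2\om_\phi(s+1)+2,
\]
and combined with the uniform diameter bound on ${\rm ess\; supp}(\xi_f)$ this confines the latter to a compact set $K_f\subseteq G$ depending only on $K$ and $f$. Since $\xi_f\geqslant 0$ with $\norm{\xi_f}_{L^1}=1$, we get $\xi_f\in [K_f,1]$, norm-compact by the preceding claim; hence $V\subseteq \prod_{f\in H}[K_f,1]$, a Tychonoff-compact product, and being closed in $X^H$ the set $V$ is itself compact.

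The main obstacle is that infinite products are essentially never locally compact in the product topology, so no brute-force application of Tychonoff can work. What saves the plan is precisely the distance-between-essential-supports estimate already derived for all $\xi\in \Omega$ just before the claim: pinning down the single coordinate $\xi_{1_H}$ automatically confines every other coordinate $\xi_f$ to a compact subset of $G$, reducing local compactness of $\Omega$ to Tychonoff applied to a product of the norm-compact slices $[K_f,1]$ supplied by the first claim.
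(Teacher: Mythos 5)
Your proof is correct and follows essentially the same route as the paper: fix the coordinate at $1_H$ via a set of the form $\{\xi\in\Omega\mid \xi_{1_H}\in[K,1/2]\}$, use the essential-support distance estimate established just before the claim to confine every coordinate $\xi_f$ to a compact slice $[K_f,\cdot]$ of $X$, and conclude by the first claim together with Tychonoff. The only differences are cosmetic (you exhibit the open set $U$ and the closedness of $V$ explicitly, and your compact sets $K_f$ replace the paper's explicit enlargements $(K)_{\om_\phi(d_H(f,1))+6\om_\phi(s+1)+6}$), so no further changes are needed.
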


\begin{proof}
Indeed, given $\xi\in \Omega$, let $K= {\rm ess\; supp}\big(\xi_1\big)$ and consider the neighbourhood 
$$
\{\eta\in \Omega \mid \eta_1\in  [K,1/2]\}
$$
of $\xi$.  Then, if $\zeta\in \{\eta\in \Omega \mid \eta_1\in  [K,1/2]\}$, we have $K\cap  {\rm ess\; supp}\big(\zeta_1\big)\neq \emptyset$ and thus
$$
{\rm ess\; supp}\big(\zeta_f\big)\subseteq (K)_{ \om_\phi(d_H(f,1))+6\om_\phi(s+1)+6 }.
$$
By consequence
$$
\{\zeta\in \Omega \mid \zeta_1\in  [K,1/2]\}\subseteq \prod_{f\in H}[(K)_{ \om_\phi(d_H(f,1))+6\om_\phi(s+1)+6 }, 1/2],
$$
where the latter product is compact.
\end{proof}

\begin{claim}
The action $\Omega \curvearrowleft H$ is continuous and proper.
\end{claim}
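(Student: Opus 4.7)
Because $\Omega$ inherits the product topology from $X^H$, continuity of $(\xi, h) \mapsto \xi \cdot h$ reduces to continuity of each coordinate map $(\xi, h) \mapsto (\xi \cdot h)_f = \xi_{hf}$ into $X$, for every fixed $f \in H$. Given $(\xi_n, h_n) \to (\xi, h)$, joint continuity of multiplication in $H$ yields $d_H(h_n f, hf) \to 0$. I would then invoke the uniform Lipschitz estimate $\norm{\xi_{g_1} - \xi_{g_2}}_{L^1} \leqslant 2MN \cdot d_H(g_1, g_2)$ valid for every $\xi \in \Omega$ to obtain $\norm{(\xi_n)_{h_n f} - (\xi_n)_{hf}}_{L^1} \to 0$. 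Since $(\xi_n)_{hf} \to \xi_{hf}$ in $X$ is immediate from product-topology convergence at the single coordinate $hf$, the triangle inequality delivers $(\xi_n)_{h_n f} \to \xi_{hf}$, as required.

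For properness, fix a compact $C \subseteq \Omega$ and set $S = \{h \in H \mid C h \cap C \neq \emptyset\}$. I would establish (i) that $S$ is $d_H$-bounded and (ii) that $S$ is closed, which together with properness of $d_H$ yields that $S$ is compact. For (i), the continuous coordinate projection $\pi_1 \colon \xi \mapsto \xi_1$ sends $C$ to a compact subset of $X$, which by the first Claim is contained in some $[K, \eps]$ with $K \subseteq G$ compact and $\eps > 0$. Whenever $\xi \in C$ and $\xi \cdot h \in C$, both $\xi_1$ and $(\xi \cdot h)_1 = \xi_h$ belong to $[K, \eps]$, so each of ${\rm ess\; supp}(\xi_1)$ and ${\rm ess\; supp}(\xi_h)$ must meet $K$, forcing $d_G\big({\rm ess\; supp}(\xi_1), {\rm ess\; supp}(\xi_h)\big) \leqslant \mathrm{diam}(K)$. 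The already-established lower support bound $\kappa_\phi(d_H(1_H, h)) - 2\om_\phi(s+1) - 2 \leqslant d_G\big({\rm ess\; supp}(\xi_1), {\rm ess\; supp}(\xi_h)\big)$, together with $\kappa_\phi(t) \to \infty$, then bounds $d_H(1_H, h)$ by a constant depending only on $C$.

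For (ii), I would use the continuity just proved: if $h_n \to h$ in $H$ with $\xi_n \in C$ and $\xi_n \cdot h_n \in C$, the compactness of $C$ allows extracting a subsequence along which $\xi_n \to \xi \in C$ and $\xi_n \cdot h_n \to \zeta \in C$; continuity then forces $\zeta = \xi \cdot h$, so $h \in S$. The only genuine technical point is step (i) — extracting from the compactness of $C$ a single compact $K \subseteq G$ that intersects ${\rm ess\; supp}(\xi_1)$ for every $\xi \in C$ — and this is supplied cleanly by the characterisation of compact subsets of $X$ in the first Claim, after which the uniform support-spreading bound does the remaining work.
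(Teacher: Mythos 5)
Your proof is correct and follows essentially the same route as the paper: continuity comes from the same uniform $2MN$-Lipschitz estimate on coordinates plus the triangle inequality, and properness from projecting the compact set to the coordinate $1$, enclosing the image in some $[K,\eps]$, and playing the lower support bound $\kappa_\phi\big(d_H(1,h)\big)-2\om_\phi(s+1)-2\leqslant d_G\big({\rm ess\; supp}(\xi_1),{\rm ess\; supp}(\xi_h)\big)$ against ${\rm diam}_{d_G}(K)$ (the paper phrases this as disjointness of $\{\xi\mid \xi_1\in[K,\eps]\}$ from its translates, while you bound the return set directly and add the closedness check the paper leaves implicit). One small caution: $X^H$ carries the product topology over an uncountable index set, so your sequential formulations should strictly be run with nets, or preceded by the observation that the equi-Lipschitz bound on elements of $\Omega$ makes its topology that of pointwise convergence on a countable dense subset of $H$, hence metrizable; the estimates themselves go through verbatim.
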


\begin{proof}
To show continuity at $\xi\in \Omega$ and $h\in H$, we must show that, for all  $f\in H$ and $\eps>0$, there are neighbourhoods $V$ and $W$ of $\xi$ and $h$ respectively so that
$\norm{(\xi\cdot h)_f-(\zeta\cdot k)_f}_{L^1}<\eps$ whenever $\zeta\in V$ and $k\in W$. 

So assume $f\in H$ and $\eps>0$ are given and let $\zeta\in V$ if $\norm{\xi_{hf}-\zeta_{hf}}_{L^1}<\frac \eps2$, while $k\in W$ if $d_H(hf, kf)<\frac \eps{4MN}$.
Then, for all $\zeta\in V$ and $k\in W$, we have 
\[\begin{split}
\norm{(\xi\cdot h)_f-(\zeta\cdot k)_f}_{L^1}
&\leqslant \norm{\xi_{hf}-\zeta_{hf}}_{L^1}+\norm{\zeta_{hf}-\zeta_{kf}}_{L^1} \\
&<\frac \eps2+2MN\cdot d_H(hf, kf)\\
&<\eps
\end{split}\]
as required.

To see that the action is proper, observe that for any compact subset of $\Omega$ the projection on the coordinate $1\in H$ is also compact, hence the subset is contained in a set of the form
$$
\{\xi\in \Omega\mid \xi_1\in [K,\eps]\},
$$
for some compact $K\subseteq G$ and $\eps>0$. 

So suppose that $\xi,\zeta\in \Omega$ satisfy $\xi_1, \zeta_1\in [K,\eps]$, while $h\in H$ satisfies
$$
\kappa_\phi\big(d_H(h,1)\big)> {\rm diam}_{d_G}(K)+2\om_\phi(s+1)+2.
$$
Then ${\rm ess\; supp}\big(\xi_{1}\big)$ and ${\rm ess\; supp}\big(\zeta_{1}\big)$ must both intersect $K$. Therefore, 
\[\begin{split}
{\rm diam}_{d_G}(K)
&<\kappa_\phi\big(d_H(h,1)\big)-2\om_\phi(s+1)-2\\
&\leqslant d_G\Big( {\rm ess\; supp}\big(\zeta_{h}\big), {\rm ess\; supp}\big(\zeta_{1}\big)\Big)
\end{split}\]
and so ${\rm ess\; supp}\big(\zeta_{h}\big)$ must be disjoint from $K$, whence ${\rm ess\; supp}\big(\zeta_{h}\big)\neq {\rm ess\; supp}\big(\xi_{1}\big)$. In particular, $(\zeta\cdot h)_1=\zeta_h\neq \xi_1$.

By consequence,
$$
\{\xi\in \Omega\mid \xi_1\in [K,\eps]\} \cap \{\xi\in \Omega\mid \xi_1\in [K,\eps]\}\cdot h=\emptyset
$$
for all $h\in H$ with $\kappa_\phi\big(d_H(h,1)\big)> {\rm diam}_{d_G}(K)+2\om_\phi(s+1)+2$, witnessing properness of the action.
\end{proof}

\begin{claim}
The action $\Omega \curvearrowleft H$ is cocompact.
\end{claim}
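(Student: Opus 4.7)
The plan is to produce a compact set $C\subseteq\Omega$ with $\Omega=C\cdot H$, using the coboundedness of $\phi$ as the main geometric input. Since $\phi\colon H\to G$ is a coarse equivalence, $\phi[H]$ is cobounded in $G$; fix $R>0$ with $G=\phi[H]\cdot B_{d_G}(1_G,R)$, and set
$$
K=\ov{B_{d_G}(1_G, R+\om_\phi(s+1)+1)}, \qquad C=\{\eta\in\Omega\mid \eta_1\in [K,1/2]\}.
$$
Then $C$ is compact by the reasoning used in the proof that $\Omega$ is locally compact: $C$ is closed in $\Omega$ since $[K,1/2]$ is closed in $X$, and the support estimates for elements of $\Omega$ yield the containment $C\subseteq \prod_{f\in H}[(K)_{\om_\phi(d_H(f,1))+6\om_\phi(s+1)+6}, 1/2]$ in a compact product.

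For elements of the orbit, given $\xi=g\cdot\psi\cdot h\in G\cdot\psi\cdot H$, coboundedness supplies $h''\in H$ with $d_G(\phi(h''),g\inv)\leqslant R$; setting $h'=h\inv h''$ one computes $(\xi\cdot h')_1=\lambda(g)\psi_{h''}$, whose essential support lies in $\ov{B_{d_G}(g\phi(h''),\om_\phi(s+1)+1)}\subseteq K$ by left-invariance of $d_G$. Since $\|(\xi\cdot h')_1\|_{L^1}=1$ we obtain $\langle(\xi\cdot h')_1\mid \chi_K\rangle=1\geqslant 1/2$, so $\xi\cdot h'\in C$, whence $G\cdot\psi\cdot H\subseteq C\cdot H$.

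To extend this to all of $\Omega$, take $\xi\in\Omega$ and a net $\xi_\alpha\in G\cdot\psi\cdot H$ with $\xi_\alpha\to\xi$, and for each $\alpha$ produce $h_\alpha'\in H$ with $\eta_\alpha:=\xi_\alpha\cdot h_\alpha'\in C$ as above. Compactness of $C$ yields a subnet with $\eta_\alpha\to\eta\in C$. The closure $C'$ in $\Omega$ of $\{\xi_\alpha\}\cup\{\xi\}\cup\{\eta_\alpha\}\cup\{\eta\}$ is compact, and both $\xi_\alpha$ and $\eta_\alpha=\xi_\alpha\cdot h_\alpha'$ lie in $C'$; properness of the $H$-action (from the previous claim) then confines $\{h_\alpha'\}$ to a relatively compact subset of $H$. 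Passing to a further subnet, $h_\alpha'\to k$ for some $k\in H$, and continuity of the action yields $\xi\cdot k=\eta\in C$, so $\xi\in C\cdot H$. The main obstacle is this final step: bridging from the orbit to its closure requires combining compactness of $C$ with properness of the $H$-action, and care is needed since $X^H$ with the product topology is generally not metrizable, so one must work with nets rather than sequences.
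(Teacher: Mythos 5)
Your overall strategy is sound and genuinely different from the paper's, but the bridging step from the orbit to its closure has a real gap. You correctly warn that $X^H$ is not metrizable and that one must use nets, yet your key assertion --- that the closure $C'$ of $\{\xi_\alpha\}\cup\{\xi\}\cup\{\eta_\alpha\}\cup\{\eta\}$ is compact --- is exactly the statement that fails for nets: unlike a convergent sequence, the value set of a convergent net (even together with its limit) need not have compact closure, since the net may take wildly spread-out values at ``small'' indices of the directed set; only sufficiently late tails are controlled. Since properness of $\Omega\curvearrowleft H$ can only be applied to a compact subset, the conclusion that $\{h'_\alpha\}$ is relatively compact in $H$ is unjustified as written. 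The gap is repairable using the earlier claim that $\Omega$ is locally compact: choose a compact neighbourhood $V$ of $\xi$, pass to a tail (or choose the approximating net from the start) with $\xi_\alpha\in V$, and apply properness to the compact set $V\cup C$ to confine the $h'_\alpha$; the rest of your argument (convergent subnet $h'_\alpha\to k$, joint continuity, uniqueness of limits in the Hausdorff space $X^H$, hence $\xi\cdot k=\eta\in C$) then goes through. Your treatment of orbit points via coboundedness, with the same $K=\ov{B_{d_G}(1_G,R+\om_\phi(s+1)+1)}$ as in the paper, is correct.

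It is worth noting how the paper sidesteps this issue entirely: rather than extracting convergent subnets, it argues quantitatively. For a fixed $\zeta\in\Omega$ it shows that any orbit point $g\psi h$ whose coordinate at $1$ meets ${\rm ess\; supp}(\zeta_1)$ can be translated into $[K,1/2]$ by some $f$ in a \emph{fixed} ball $B_{d_H}(1_H,r)$, and then transfers this to $\zeta$ itself by a contradiction argument combining the uniform $2MN$-Lipschitzness of $f\mapsto \xi_f$ with a finite $\frac{1}{5MN}$-dense set in $B_{d_H}(1_H,r)$. That route needs no properness and no net extraction, only the Lipschitz estimate; your route is softer and arguably more conceptual (orbit case plus compactness/properness), but it genuinely requires the local compactness of $\Omega$ to close the argument.
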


\begin{proof}
Fix $R>\sup_{g\in G}d_G(g, \phi[H])$
and let $K\subseteq G$ be the closed ball of radius $R+\om_\phi(s+1)+1$ centred at $1_G$. We will show that
$$
\Omega= \{\xi\in \Omega\mid \xi_1\in [K,1/2]\}\cdot H.
$$

To see this, fix $\zeta\in \Omega$ and set $C={\rm ess\; supp}\big(\zeta_{1}\big)$. Now, suppose that  ${\rm supp}\big((g\psi h)_1\big)$ intersects $C$ for some $g\in G$ and $h\in H$, whence $d_G(C,g\phi(h))\leqslant \om_\phi(s+1)+1$. Pick   $f\in H$ so that $d_G(g\phi(hf), 1_G)=d_G(\phi(hf), g\inv )<R$ and observe that then 
\[\begin{split}
\kappa_\phi\big(d_H(f,1)\big)
&\leqslant d_G( g\phi(h),g\phi(hf))\\
&\leqslant d_G(g\phi(h), C)+ {\rm diam}_{d_G}(C)+d_G(C, 1_G)+ d_G(1_G, g\phi(hf))\\
&\leqslant \om_\phi(s+1)+1+ {\rm diam}_{d_G}(C)+d_G(C, 1_G)+ R.
\end{split}\]
Letting 
$$
r=\sup\big({t}\mid \kappa_\phi(t)\leqslant  \om_\phi(s+1)+1+ {\rm diam}_{d_G}(C)+d_G(C, 1_G)+ R\big),
$$ 
we find that for all  $g\in G$ and $h\in H$ for which ${\rm supp}\big((g\psi h)_1\big)$ intersects $C$, there is some $f\in B_{d_H}(1_H, r)$ with $d_G(g\phi(hf), 1_G)<R$, whence also 
$$
{\rm supp}\big((g\psi h)_f\big)\subseteq K.
$$ 
and  $\langle (g\psi h)_f\mid \chi_K\rangle =1$.
It follows that $\zeta$ is a limit of points $g\psi h\in \Omega$ for which there are $f\in B_{d_H}(1_H, r)$ 
with $\langle (g\psi h)_f\mid \chi_K\rangle =1$.

Assume for a contradiction that $(\zeta\cdot f)_1=\zeta_f\notin [K,1/2]$ for all $f\in H$ and let $f_1,\ldots, f_n$ be $\frac 1{5MN}$-dense in $B_{d_H}(1_H, r)$.  Choose $g\psi h\in \Omega$ close enough to $\zeta$ so that $(g\psi h)_{f_i}\notin [K,1/2]$ for all $i\leqslant n$, while, on the other hand, $\langle (g\psi h)_f\mid \chi_K\rangle =1$ for some $f\in B_{d_H}(1_H, r)$.
Pick then $i$ with $d_H(f_i, f)\leqslant \frac 1{5MN}$, whereby
\[\begin{split}
1/2
&\leqslant \big|\langle (g\psi h)_{f_i}\mid \chi_K\rangle- \langle (g\psi h)_f\mid \chi_K\rangle\big|\\
&\leqslant \norm{ (g\psi h)_{f_i}- (g\psi h)_{f}}_{L^1}\\
&\leqslant 2MN\cdot d_H(f_i, f)\\
&< 1/2,
\end{split}\]
which is absurd. 

Thus, $\zeta\in \{\xi\in \Omega\mid \xi_1\in [K,1/2]\}\cdot f$ for some $f\in H$. Since $\zeta\in \Omega$ was arbitrary and $\{\xi\in \Omega\mid \xi_1\in [K,1/2]\}$ relatively compact in $\Omega$, this proves cocompactness of the action.
\end{proof}

\begin{claim}
The action $G\curvearrowright \Omega $ is continuous, proper and cocompact.
\end{claim}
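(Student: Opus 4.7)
The plan is to mirror the argument used for the $H$-action on $\Omega$, verifying continuity, properness and cocompactness in turn while exploiting the same two ingredients: the coordinate-wise description of the product topology on $X^H$, and the structural form $\xi_f=\sum_i\alpha_i\chi_{g_iB}$ of elements of $X$, where $\{g_i\}$ is a $3$-discrete subset of $G$ of $d_G$-diameter at most $2\om_\phi(s+1)$. For continuity at a point $(g_0,\eta)\in G\times \Omega$, it suffices, by the product topology, to establish convergence in each coordinate $f\in H$ of the maps $(g,\xi)\mapsto \lambda(g)\xi_f$ into $L^1(G,\mu)$. The triangle inequality
\[
\norm{\lambda(g)\xi_f-\lambda(g_0)\eta_f}_{L^1}\leqslant \norm{\xi_f-\eta_f}_{L^1}+\norm{\lambda(g)\eta_f-\lambda(g_0)\eta_f}_{L^1}
\]
reduces the task to nearness of $\xi$ to $\eta$ in the single coordinate $f$ (first term) and the strong continuity of the left-regular representation $\lambda$ on $L^1(G,\mu)$ (second term).

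For properness, I would recycle the observation from the previous claim that any compact subset of $\Omega$ sits inside some $\{\xi\in\Omega\mid \xi_1\in [K,\eps]\}$ with $K\subseteq G$ compact and $\eps>0$. From the form of $X$, for any $\xi\in \Omega$ the set ${\rm ess\;supp}(\xi_1)$ is nonempty and has $d_G$-diameter at most $D:=2\om_\phi(s+1)+2$. Hence, if both $\xi$ and $g\cdot\xi$ lie in the above set, then both ${\rm ess\;supp}(\xi_1)$ and $g\cdot {\rm ess\;supp}(\xi_1)$ meet $K$, so we may choose $p\in K\cap {\rm ess\;supp}(\xi_1)$ and $q\in K$ with $d_G(gp,q)\leqslant D$, whence $g=(gp)p\inv\in (K)_D\cdot K\inv$, a fixed compact subset of $G$ (using properness of $d_G$).

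For cocompactness, I would set $K:=\ov{B_{d_G}(1_G,\,2\om_\phi(s+1)+1)}$ and show $\Omega=G\cdot \{\xi\in\Omega\mid \xi_1\in [K,1/2]\}$. Given $\zeta\in\Omega$, write $\zeta_1=\sum_i\alpha_i\chi_{g_iB}\in X$ and pick any $g_0\in\{g_i\}$, which is non-empty since $\norm{\zeta_1}_{L^1}=1$. Then
\[
(g_0\inv\cdot \zeta)_1=\lambda(g_0\inv)\zeta_1=\sum_i\alpha_i\chi_{g_0\inv g_iB}
\]
is supported inside $K$, giving $\langle (g_0\inv\cdot\zeta)_1\mid \chi_K\rangle=\norm{\zeta_1}_{L^1}=1\geqslant 1/2$. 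The target set is relatively compact in $\Omega$ by the second claim, so this yields cocompactness.

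The main subtlety, exactly as in the preceding claim, is that the structural description of $X$ must survive passage to the closure $\Omega=\ov{G\cdot\psi\cdot H}$, so that coordinates of a general $\zeta\in \Omega$ are still honest finite convex combinations of $\chi_{gB}$ with controlled geometry. This is guaranteed by the first claim, which shows that norm limits in $X$ retain the bound $M$ on the number of summands and the diameter bound $2\om_\phi(s+1)$ on the supporting $3$-discrete set. Once that is in hand, the $G$-side argument is actually more direct than the $H$-side one, since cocompactness is witnessed by a single $g_0\inv$-translation rather than by a density-based approximation inside a metric ball.
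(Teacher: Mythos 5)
Your proposal is correct and follows essentially the same route as the paper: continuity via (strong) continuity of $\lambda$ on $L^1(G,\mu)$ coordinatewise, properness by trapping compact sets in $\{\xi\in\Omega\mid \xi_1\in[K,\eps]\}$ and using the diameter bound $2\om_\phi(s+1)+2$ on ${\rm ess\,supp}(\xi_1)$, and cocompactness by translating that support into a fixed ball. The only differences are cosmetic (you make the translating element $g_0\inv$ explicit and phrase properness as $g\in (K)_D\cdot K\inv$ rather than bounding $d_G(g,1)$).
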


\begin{proof}
Continuity is trivial since already the action of $G$ on $X$ and hence on $X^H$ is continuous. For properness, it suffices to see that, for every $\eps>0$ and compact set $1\in K\subseteq G$, the set of $g\in G$ for which 
$$
\{\xi\in \Omega\mid \xi_1\in [K,\eps]\}\cap g\cdot \{\xi\in \Omega\mid \xi_1\in [K,\eps]\}\neq\emptyset
$$
is relatively compact in $G$. But, if $\xi_1\in [K,\eps]$ and $\lambda(g)\xi_1=(g\cdot \xi)_1\in [K,\eps]$, then ${\rm ess\; supp}\big(\xi_{1}\big)$ intersects both $K$ and $g\inv K$, whereby
$$
d_G(K,g\inv K)\leqslant {\rm diam}_{d_G}\big({\rm ess\; supp}\big(\xi_{1}\big)\big)\leqslant 2\om_\phi(s+1)+2
$$
and thus $d_G(g, 1)\leqslant 2\om_\phi(s+1)+2+2\,{\rm diam}_{d_G}(K)$.

Finally, for cocompactness, let $K$ be the ball of radius $4\om_\phi(s+1)+4$ centred at $1_G$. Then every subset of $G$ of diameter at most $2\om_\phi(s+1)+2$ can be translated via an element of $G$ into $K$. But this means that, if $\xi\in \Omega$, then 
$$
{\rm ess\; supp}\big((g\cdot \xi)_{1}\big)=g\cdot {\rm ess\; supp}\big(\xi_{1}\big)\subseteq K
$$
for some $g\in G$ and thus $g\cdot \xi\in \{\zeta\in \Omega\mid \zeta_1\in [K,1]\}$. In other words,
$$
\Omega=G\cdot  \{\zeta\in \Omega\mid \zeta_1\in [K,1]\},
$$
showing cocompactness.
\end{proof}
As both of the actions are proper, cocompact and continuous, we have a topological coupling. 
\end{proof}

As stated, Theorem \ref{main} only makes sense for actions of locally compact groups. Nevertheless, with appropriate adjustments, one may formulate and prove a generalisation for a larger class of Polish groups, namely those with bounded geometry. This is the subject of a forthcoming paper \cite{bounded geom}.

\end{document}